% EJC papers must begin as follows
\documentclass[12pt]{article}

\usepackage{epsf,amsmath,amssymb,amsfonts,oldlfont,graphics,oldgerm,latexsym,dsfont}
\usepackage{graphics}
\usepackage[dvips]{color}
\usepackage{epsf}
\usepackage{graphicx}
\usepackage{verbatim}

\newcommand{\Aut}{\ensuremath{\operatorname{Aut}}}
\newcommand{\End}{\ensuremath{\operatorname{End}}}

\newcommand{\id}{\ensuremath{\text{\rm id}}}

\newcommand{\proof}{\noindent{\bf Proof.\ }}
\newcommand{\qed}{\hfill $\square$ \bigskip}

\newcommand{\m}[1]{\textcolor{black}{#1}}
\newcommand{\w}[1]{\textcolor{black}{#1}}
\newcommand{\f}[1]{\textcolor{black}{#1}}

% use this command to create hyperlinks (optional and recommended)
%\usepackage[colorlinks=true,citecolor=black,linkcolor=black,urlcolor=blue]{hyperref}

% use these commands for typesetting doi and arXiv references in the bibliography
%\newcommand{\doi}[1]{\href{http://dx.doi.org/#1}{\texttt{doi:#1}}}
%\newcommand{\arxiv}[1]{\href{http://arxiv.org/abs/#1}{\texttt{arXiv:#1}}}

% all overfull boxes must be fixed; i.e. there must be no text protruding into the margins
% use \sloppy to avoid overly wide text
\sloppy

% declare theorem-like environments
%\theoremstyle{plain}
\newtheorem{theorem}{Theorem}
\newtheorem{lemma}[theorem]{Lemma}
\newtheorem{corollary}[theorem]{Corollary}

%\theoremstyle{definition}

%\theoremstyle{remark}

%%%%%%%%%%%%%%%%%%%%%%%%%%%%%%%%%%%%%%%%%%%%%%%%%%%%%%%

\begin{document}

% if needed include a line break (\\) at an appropriate place in the title

\title{Endomorphism Breaking in Graphs}

% input author, affilliation, address and support information as follows;
% the address should include the country, and does not have to include
% the street address

\author{Wilfried Imrich\thanks{The research was supported by the Austrian Science Fund (FWF): project W1230.}  \\
\small Montanuniversit\"at Leoben, A-8700 Leoben, Austria \\
\small\tt imrich@unileoben.ac.at
\and
Rafa{\l}  Kalinowski\thanks{The research was partially supported by the Polish Ministry of Science and Higher
  Education.}\\
\small AGH University of Science and Technology,\\
\small al. Mickiewicza 30, 30-059 Krakow, Poland\\
\small\tt kalinows@agh.edu.pl
\and
Florian Lehner\thanks{The research was supported by the Austrian Science Fund (FWF): project W1230.}\\
\small Institut f\"{u}r Geometrie, Technische Universit\"{a}t Graz \\ \small Kopernikusgasse 24/IV, A-8010 Graz, Austria\\
\small\tt {f.lehner@tugraz.at}
\and
Monika Pil\'sniak\thanks{The research was partially supported by the Polish Ministry of Science and Higher
  Education.}\\
\small AGH University of Science and Technology,\\
\small al. Mickiewicza 30, 30-059 Krakow, Poland\\
\small\tt pilsniak@agh.edu.pl
\date{}
}

%\small Mathematics Subject Classifications: comma separated list of
% MSC codes available from http://www.ams.org/mathscinet/freeTools.html}

%\includegraphics[width=\textwidth]{pageHeader}
\maketitle

\begin{abstract}
We introduce the {\it endomorphism distinguishing number} $D_e(G)$ of a graph $G$
as the least cardinal $d$ such that $G$ has a vertex coloring with $d$ colors
that is only preserved by the trivial endomorphism. This generalizes the notion
of the distinguishing number $D(G)$ of a graph $G$, which is defined for
automorphisms instead of endomorphisms.

As the number of endomorphisms can vastly exceed the number of automorphisms,
the new concept opens challenging problems, several of which are presented here.
 In particular, we investigate relationships between $D_e(G)$ and
the endomorphism motion of a graph $G$\m{, that is, the least possible number of vertices moved by
a nontrivial endomorphism of $G$}.
Moreover, we extend numerous results about the distinguishing number of finite and infinite
graphs  to the endomorphism distinguishing number. This is the
main concern of the paper.

  % keywords are optional
  \bigskip\noindent \textbf{Keywords:} Distinguishing number; Endomorphisms;
Infinite graphs;
\vspace{0.7cm}
\small Mathematics Subject Classifications: 05C25, 05C80, 03E10

\end{abstract}

%%%%%%%%%%%%%%%%%%%%%%%%%%%%%%%%%%%%%%%%%%%%%%%%%%%%%%%%%%%%%%%
\section{Introduction}

Albertson and Collins~\cite{alco-96} introduced the {\it
distinguishing number} $D(G)$ of a graph $G$  as the least cardinal
$d$ such that $G$ has a labeling with $d$ labels that is only
preserved by the trivial automorphism.

This  concept has spawned numerous
papers, mostly on finite graphs.
But countable  infinite graphs  have also been
investigated with respect to the distinguishing number; see \cite{imje-xx},
\cite{smtuwa-xx},   \cite{tu-11}, and \cite{wazh-07}.
For graphs of higher cardinality compare \cite{imkltr-07}.

The aim of this paper is the presentation of fundamental results for
the endomorphism distinguishing number,
and of open problems.
In particular, we extend the Motion Lemma of Russell and Sundaram \cite{rusu-98} to
endomorphisms,  present \w{endomorphism motion conjectures that
generalize the Infinite Motion Conjecture {\m of Tom Tucker \cite{tu-11}} and the Motion Conjecture of
\cite{cuimle-2014}}, prove
the validity of special cases, and support the conjecture\w{s} by examples.

\section{Definitions and Basic Results}

As the distinguishing number has already been defined, let us note that
$D(G)=1$ for all asymmetric graphs. This means that almost all finite
graphs have distinguishing number one, because almost all graphs are asymmetric,
see Erd{\H{o}}s and R{\'e}nyi \cite{erre-63}.
Clearly $D(G)\geq 2$ for all other graphs. Again, it is natural to conjecture
that almost all of them have distinguishing number two. This is supported by
the observations of Conder and Tucker \cite{cotu-2011}.

However, for the complete graph $K_n$, and the complete bipartite graph
$K_{n,n}$ we have
$D(K_n) = n$, and $D(K_{n,n}) = n+1$. Furthermore,
the distinguishing number of the cycle of length 5 is 3,
but cycles $C_n$ of length $n \geq 6$  have distinguishing
number 2.
%Moreover, the distinguishing number
%of the two-sided infinite path $P_\infty$ is 2.

This compares with a more general result of Klav\v zar, Wong and Zhu \cite{klwozh-06} and of Collins and Trenk
\cite{cotr-06}, which asserts that
$D(G)\leq \Delta(G) +1$, \w{where $\Delta$ denotes the maximum degree of $G$. E}quality holds if and only if $G$ is a $K_n$, $K_{n,n}$ or $C_5$.
\smallskip

Now to the endomorphism distinguishing number. Before defining it, let us
recall that an {\it endomorphism} of a graph $G=(V,E)$ is a mapping $\phi:\,
V\rightarrow V$ such that for every edge $uv\in E$ its image $\phi(u)\phi(v)$ is an
edge, too.
\bigskip

\noindent
{\bf Definition}
\emph{The endomorphism distinguishing number $D_e(G)$ of a graph $G$ is the
least cardinal $d$ such that $G$ has a labeling with $d$ labels that
is  preserved only by the identity endomorphism of $G$.}
\bigskip

\noindent
Let us add that  we also say \emph{colors} instead of \emph{labels}. If a labeling $c$ is not preserved
by an endomorphism $\phi$, we say that $c$ {\it breaks} $\phi$.
\smallskip

Clearly $D(G) \leq D_e(G)$. For graphs $G$ with $\Aut(G)=\End(G)$
equality holds. Such graphs
are called {\it core graphs}.
Notice that complete graphs and odd cycles
are core graphs, see \cite{goro-01}. Hence
 $D_e(K_n)=n$, $D_e(C_5)=3$, and $D_e(C_{2k+1})=2$ for $k\geq 3$.

Interestingly,  almost all graphs are core graphs, as shown by
Koubek and R{\"o}dl \cite{koro-84}. Because almost all graphs are asymmetric,
this implies that almost all graphs have trivial endomorphism monoid,
that is, $\End(G)=\{\id\}$.  Graphs with trivial endomorphism monoid are called {\it rigid}.
Clearly $D_e(G)=1$ for any rigid graph $G$, and thus
$D_e(G)=1$ for almost all graphs $G$.

$D_e(G)$ can be equal to $D(G)$ even when $\Aut(G)
\subsetneq \End(G)$. For example,  this is
the case for even cycles. We formulate this as a lemma.

\begin{lemma} The automorphism group of even cycles is properly contained in their endomorphism monoid, but
  $D(C_{2k}) = D_e(C_{2k})$ for all $k \geq 2$.
\end{lemma}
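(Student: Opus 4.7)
The lemma combines two assertions, and I would handle them in turn.

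For the proper containment $\Aut(C_{2k}) \subsetneq \End(C_{2k})$, I would exhibit an explicit non-automorphism endomorphism. Define $\phi(v_i) = v_{i \bmod 2}$. Each edge $v_iv_{i+1}$ of $C_{2k}$ maps to the edge $v_0v_1$, so $\phi$ is a graph homomorphism; its image has only two vertices, so for $k \geq 2$ it is not surjective, hence not an automorphism.

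For the equality $D_e(C_{2k}) = D(C_{2k})$, the inequality $D(C_{2k}) \leq D_e(C_{2k})$ is automatic, so it suffices to exhibit, for each $k \geq 2$, a labeling with $D(C_{2k})$ colors that is preserved only by $\id$. I would treat $k = 2$ and $k \geq 3$ separately, since $D(C_4) = 3$ but $D(C_{2k}) = 2$ for $k \geq 3$. For $k = 2$, I would label $v_0, v_1, v_2, v_3$ by $1, 2, 2, 3$; a finite case check against the eight dihedral automorphisms of $C_4$ and the eight non-bijective endomorphisms (two foldings per edge) confirms that only the identity preserves it. For $k \geq 3$, I would use the $2$-coloring with exactly $v_0, v_1, v_3$ black. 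Since $v_0v_1$ is the unique edge in the black set, any automorphism preserving the coloring must fix $\{v_0, v_1\}$ setwise and fix $v_3$, and the only such dihedral symmetry is the identity.

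The substantive task for $k \geq 3$ is to rule out non-automorphism endomorphisms. Using that the image of a homomorphism from a connected graph is connected, and that the only connected proper subgraphs of $C_{2k}$ are subpaths, any non-automorphism endomorphism $\phi$ has image a proper subpath $Q$. I would split on the size of $Q \cap \{v_0, v_1, v_3\}$. If this is at most one, then $\phi(v_0) = \phi(v_1)$ is forced, contradicting that the edge $v_0v_1$ must map to an edge. If it equals two, the three subcases (with $v_3$, $v_1$, or $v_0$ excluded from $Q$) each yield a contradiction via the non-adjacency of the two remaining black vertices in $C_{2k}$ for $k \geq 3$. If it equals three, then $Q \supseteq \{v_0,v_1,v_2,v_3\}$; adjacency forces $\{\phi(v_0), \phi(v_1)\} = \{v_0, v_1\}$, and the fact that $v_2$ is the unique common neighbor of $v_1$ and $v_3$ pins $\phi$ down to the identity on $v_0v_1v_2v_3$, after which the white fibers propagate uniquely around the cycle.

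The main obstacle is the last set of subcases, where one must confirm that no folding wrapping the cycle the long way around through white vertices survives. The key leverage is that any closed walk $\phi(v_0), \ldots, \phi(v_{2k-1})$ on a proper subpath of $C_{2k}$ must reverse direction, which forces coincidences among images that are incompatible with the asymmetric black pattern $\{v_0, v_1, v_3\}$.
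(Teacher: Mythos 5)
Your overall strategy coincides with the paper's: up to relabeling you use the same coloring for $k\ge 3$ (three black vertices in the pattern black--black--white--black), observe that the image of a proper endomorphism is a subpath, pin down the unique black edge, and derive the contradiction from the impossibility of routing the long all-white stretch of the cycle. However, one step of your case analysis fails as stated. In the case $|Q\cap\{v_0,v_1,v_3\}|=2$, your uniform justification ``non-adjacency of the two remaining black vertices'' breaks down in the subcase $v_3\notin Q$: the two black vertices remaining in $Q$ are then $v_0$ and $v_1$, which \emph{are} adjacent, so the edge $v_0v_1$ can perfectly well map to itself and no contradiction arises at that point. That subcase (and likewise the part of the $=3$ case in which $\phi(v_3)\in\{v_0,v_1\}$, which your sentence ``pins $\phi$ down to the identity on $v_0v_1v_2v_3$'' silently excludes by assuming $\phi(v_3)=v_3$) genuinely requires the walk argument you reserve for the third case: one finds $\phi(v_3)=\phi(v_1)$, follows $\phi(v_4),\phi(v_5),\dots$ through white vertices, and uses the fact that $v_2$ has no white neighbour to see that the walk cannot continue or close up. This is exactly the paper's final step, so the repair is available, but as written the case analysis has a hole.

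A secondary issue: your census of $\End(C_4)$ is incomplete. Besides the eight automorphisms and the eight foldings onto single edges, there are endomorphisms whose image is a path on three vertices, e.g.\ the map fixing $v_0,v_1,v_2$ and sending $v_3$ to $v_1$, so the advertised ``finite case check'' runs over a proper subset of $\End(C_4)$. The labeling $1,2,2,3$ does still work --- the unique colors $1$ and $3$ force $\phi(v_0)=v_0$ and $\phi(v_3)=v_3$, and adjacency together with color then forces $\phi(v_1)=v_1$ and $\phi(v_2)=v_2$ --- but it is this argument, not the enumeration, that should be recorded.
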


\proof  It is easily seen that every even cycle admits proper endomorphisms, that is, endomorphisms that are not
automorphisms. Furthermore, it is readily verified that \m{$|\End(C_4)|=14$ and} $D(C_4) = D_e(C_4)=3$.

Hence, let $k \geq 3$.  Color
the vertices  $v_1,v_2$ and $v_4$ black and all other vertices white, see
Figure \ref{fig:hex}. We wish to show that this coloring is endomorphism
distinguishing. Clearly this coloring  distinguishes all automorphisms.

\begin{figure}[h]
\begin{center}
\includegraphics [width=4cm]{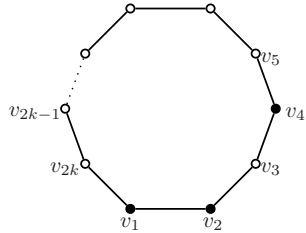}
\caption{Distinguishing an even cycle}
\label{fig:hex}
\end{center}
\end{figure}
\noindent

Let $\phi$ be a
proper endomorphism. It has to map the cycle into a proper connected subgraph of itself. Thus,
$\phi(C_{2k})$ must be a path, say $P$.

Furthermore,  \w{the color of the endpoints of an edge must be preserved under $\phi$.}
Hence $v_1v_2$ is mapped into itself. Because $v_{2k-1}v_{2k}$ is the
only edge with two white endpoints that is adjacent to $v_1v_2$, it must
also be mapped into itself. This fixes $v_{2k-1}, v_{2k}, v_1$ and $v_2$.
But then $v_3$ and $v_4$ are also fixed.

Now we observe that the path $v_4v_5 \cdots v_{2k}v_1$ in $C_{2k}$
has only white interior vertices and that it it has to be mapped into a walk
in $P$ from $v_4$ to $v_1$ that contains only white interior vertices.
Clearly this is not possible. \qed
%\bigskip

To show that $D(G)$ can be smaller than $ D_e(G)$, we consider graphs $G$ with
trivial automorphism group but nontrivial
endomorphisms monoid. For such graphs $D(G) = 1$, but $D_e(G) > 1$.
Easy examples are asymmetric, nontrivial trees $T$. For,
every such tree has at least 7 vertices and at least three vertices of degree
1. Let $a$ be a vertex of degree 1 and $b$ its neighbor. Because
$T$ has at least 7 vertices and since it is connected, there must be
a neighbor $c$ of $b$ that is different from $a$. Then the mapping

\[ \phi: v \mapsto \left\{ \begin{array}{l}c \mbox{ if } v = a\\
v \mbox{ otherwise} \end{array}\right. \]
is a nontrivial endomorphism.

\section{The Endomorphism Motion Lemma}

Russel and Sundaram  \cite{rusu-98} proved that the distinguishing number of a graph is
small when every automorphism of $G$ moves many elements.
We generalize this result to endomorphisms and begin with the definition
of motion.

The {\it motion} $m(\phi)$ of a nontrivial endomorphism  $\phi$ of
a graph $G$, is the number of elements it moves:
$$m(\phi)  =  |\{v \in V(G)\,|\, \phi(v) \neq v\}|.$$
The {\it endomorphism motion} of a graph $G$ is
$$m_e(G) = \min_{\phi \in \End(G)\setminus\{\id\}} m(\phi)$$
For example, $m_e(C_4) = 1, m_e(C_5) = 4, m_e(C_{100}) = 49, m_e(K_{100}) = 2$.

In the sequel we will prove  the following generalization of Theorem 1 of Russell
and Sundaram \cite{rusu-98}.

\begin{lemma}[Endomorphism Motion Lemma]
\label{EML} %\emph{[Endomorphism Motion Lemma]}
For any graph $G$,
\begin{equation}
\label{le:endmo}
d^{\frac{m_e(G)}{2}} \geq  |\End(G)|\,
\end{equation} implies $D_e(G)\leq d.$
\end{lemma}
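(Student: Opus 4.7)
The plan is to adapt the probabilistic strategy of Russell and Sundaram from automorphisms to endomorphisms. I would color each vertex of $G$ independently and uniformly at random with one of $d$ colors, obtaining a random labeling $c$, and then use linearity of expectation together with a union bound to show that the expected number of nontrivial endomorphisms preserving $c$ is strictly less than $1$. The hypothesis $d^{m_e(G)/2} \geq |\End(G)|$ is precisely what forces this expectation below $1$; hence some labeling breaks every nontrivial endomorphism, giving $D_e(G) \leq d$.

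The substantive step is to prove, for each fixed $\phi \in \End(G) \setminus \{\id\}$, that
\[
\Prob[\phi \text{ preserves } c] \;\leq\; d^{-m(\phi)/2}.
\]
To this end I would introduce the \emph{constraint graph} $H_\phi$ on vertex set $V(G)$ whose edges are the unordered pairs $\{v,\phi(v)\}$ with $\phi(v) \neq v$. The labeling $c$ is preserved by $\phi$ iff $c(v) = c(\phi(v))$ for every moved vertex $v$, i.e.\ iff every connected component of $H_\phi$ is monochromatic; so the preservation probability equals $d^{-r(H_\phi)}$, where $r(H_\phi)$ denotes the rank of $H_\phi$ (the number of edges of any spanning forest). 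The bound then reduces to showing $r(H_\phi) \geq m(\phi)/2$. Since every moved vertex $v$ sits in the same component of $H_\phi$ as $\phi(v)$ and has an incident edge, the moved vertices partition across the nontrivial components of $H_\phi$. A nontrivial component $C$ satisfies $|C| \geq 2$ and contains $m_C \leq |C|$ moved vertices, and it contributes $|C|-1 \geq |C|/2 \geq m_C/2$ edges to any spanning forest; summing over such components yields the desired inequality.

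Once the preservation bound is in hand the rest is routine: summing $d^{-m(\phi)/2} \leq d^{-m_e(G)/2}$ over $\End(G) \setminus \{\id\}$ bounds the expected number of nontrivial endomorphisms preserving $c$ by $(|\End(G)|-1)\,d^{-m_e(G)/2} < 1$ under the hypothesis. I expect the main obstacle to be precisely the preservation-probability bound itself: for an automorphism this bound follows painlessly from cycle structure (a $k$-cycle contributes $k-1$ equality constraints on its $k$ moved points), whereas a non-bijective endomorphism has no cycle decomposition to exploit. The constraint graph $H_\phi$ is the device that replaces cycles with arbitrary connected components, and the slack estimate $|C|-1 \geq |C|/2$ absorbs the loss of structure so that the generalization goes through essentially unchanged.
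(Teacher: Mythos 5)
Your proposal is correct and is essentially the paper's own argument in different clothing: the connected components of your constraint graph $H_\phi$ are exactly what the paper calls the orbits of $\phi$, your rank $r(H_\phi)$ is the paper's orbit norm $o(\phi)$, and the estimate $|C|-1 \geq m_C/2$ is the paper's observation that $o(\phi) \geq m(\phi)/2$. The random-coloring union bound and the final computation coincide with the paper's Orbit Norm Lemma and its application.
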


\w{The proof will be an easy consequence of Lemma \ref{orbnorm},  the Orbit Norm Lemma. We first  define orbits of endomorphisms.}
\bigskip

\noindent{\bf Definition} \emph{
An orbit of an endomorphism $\phi$ of a graph $G$ is an equivalence class with
respect to the equivalence relation $\sim$ on $V(G)$, where $u\sim
v$ if there exist nonnegative integers $i$ and $j$ such that
$\phi^{i}(u)=\phi^{j}(v)$.}
\bigskip

\noindent
The orbits form a partition $V(G) = I_1\cup I_2\cup \cdots \cup I_k$,
$I_i \cap I_j = \emptyset$ for $1\leq i < j\leq k$,
of $V(G)$. For finite graphs it can  be characterized  as the unique
partition with the maximal number of sets that are invariant under \m{the preimage}
$\phi^{-1}$. For infinite graphs we characterize it as the finest
partition that is invariant under $\phi^{-1}$.
For automorphisms it coincides with the cycle decomposition.

The {\it orbit norm} of an endomorphism $\phi$ with the orbits $I_1,
I_2, \ldots, I_k$ is
$$o(\phi) = \sum_{i=1}^k(|I_i| -1)\,,$$
and the {\it endomorphism orbit norm} of a graph $G$ is
$$o(G)  = \min_{\phi \in \End(G)\setminus\{\id\}} o(\phi)\,.$$
Notice that $\phi$ may not move all elements of a nontrivial orbit,
whereas automorphisms move all elements in a nontrivial cycle of the
cycle decomposition. To see this, consider an orbit $I = \{a,b\}$,
where $\phi(a) = b$, and $\phi(b) = b$. Only one element of the
orbit is moved, and the contribution of $I$ to the orbit norm of
$\phi$ is $1$.
Clearly $o(\phi) \geq m(\phi)/2$, and thus $o(G)\geq m_e(G)/2$.

\begin{lemma}[Orbit Norm Lemma]%\emph{[Orbit Norm Lemma]}
\label{orbnorm}
A graph $G$ is endomorphism $d$-distinguishable if
$$\sum_{\phi\in\End(G)\setminus\{\id\}}d^{-o(\phi)}<1.$$
\end{lemma}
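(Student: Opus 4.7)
The plan is to use the probabilistic method, generalizing the Russell--Sundaram argument. Sample a random coloring $c \colon V(G) \to \{1,\dots,d\}$ by choosing each vertex's color independently and uniformly at random, and aim to show that with positive probability no nontrivial endomorphism is preserved.

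The first step is to characterize preservation of $c$ by a nontrivial endomorphism $\phi$ in terms of its orbits: I claim $\phi$ preserves $c$ if and only if $c$ is constant on every orbit of $\phi$. For the ``only if'' direction, $c \circ \phi = c$ yields $c \circ \phi^i = c$ for all $i \geq 0$ by induction, so whenever $u \sim v$ with $\phi^i(u) = \phi^j(v)$ we obtain $c(u) = c(\phi^i(u)) = c(\phi^j(v)) = c(v)$. For the ``if'' direction, $\phi(v)$ and $v$ always lie in a common orbit (take $i=1$, $j=0$), hence $c(\phi(v)) = c(v)$.

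The second step computes the probability. Because the orbits $I_1, I_2, \dots$ of $\phi$ partition $V(G)$, the events ``$I_j$ is monochromatic'' are mutually independent; each has probability $d^{-(|I_j|-1)}$, since fixing the color of one vertex of $I_j$ forces the other $|I_j|-1$ to match. Multiplying yields
$$\Prob[\,c \text{ preserves } \phi\,] \;=\; \prod_{j} d^{-(|I_j|-1)} \;=\; d^{-o(\phi)}.$$
The third step is Boole's inequality applied to the union over $\phi \in \End(G)\setminus\{\id\}$:
$$\Prob\!\left[\,\exists\, \phi \in \End(G)\setminus\{\id\} : c \text{ preserves } \phi\,\right] \;\leq\; \sum_{\phi \neq \id} d^{-o(\phi)} \;<\; 1,$$
so some realization of $c$ breaks every nontrivial endomorphism, which is exactly the statement $D_e(G) \leq d$.

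The main obstacle I foresee is the treatment of infinite graphs whose endomorphism monoid is uncountable, since the union bound as usually stated is only $\sigma$-additive. The hypothesis itself, however, already constrains the situation: a sum of nonnegative reals indexed by an arbitrary set can be finite only if countably many terms are nonzero, and for $d \geq 2$ a positive contribution requires $o(\phi) < \infty$. The remaining endomorphisms all satisfy $o(\phi)=\infty$, meaning each has either an infinite orbit or infinitely many nontrivial orbits; individually each has preservation probability zero, but ruling out the uncountable union of such null events requires an additional argument, which I expect to be the most delicate point of the proof.
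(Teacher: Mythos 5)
Your proof is correct and follows the same route as the paper's: a uniform independent random $d$-coloring, the identity $\Prob[\,c \text{ is preserved by } \phi\,]=d^{-o(\phi)}$ obtained from the orbit partition, and a union bound; you merely spell out the orbit characterization and the independence computation that the paper dismisses as ``easily seen.'' The subtlety you flag at the end is real, but the paper's own proof applies Boole's inequality with exactly the same silent restriction to the countable case; in practice the lemma is only invoked to derive the Endomorphism Motion Lemma, whose hypothesis $d^{m_e(G)/2}\geq|\End(G)|$ is used with both sides finite, and the genuinely infinite situation is deliberately deferred to the Endomorphism Motion Conjecture and to Theorems \ref{TTconj_w} and \ref{thm:special} --- so your concern is a fair criticism of the lemma's stated generality, not a gap relative to the paper's argument.
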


\proof  We study the behavior of a random $d$-coloring $c$ of G, the probability distribution
given by selecting the color of each vertex independently and uniformly in the set
$\{1, \ldots, d\}$. Fix an endomorphism $\phi \neq \id$ and consider the event
that the random coloring $c$ is  preserved by $\phi$, that is, $c(v)=c(\phi(v))$ for each vertex $v$ of $G$.
Then it is easily seen that
$$\mbox{Prob}[\forall v:\, c(v) = c(\phi(v))] = \left(\frac{1}{d}\right)^{o(\phi)} \leq \left(\frac{1}{d}\right)^{o(G)}\!\!.$$
Collecting together \m{these events, we have
$$\mbox{Prob}[\exists \phi \neq \id\;\forall v:\, c(v) = c(\phi(v))] \leq
\sum_{\phi\in\End(G)\setminus\{\id\}} \left(\frac{1}{d}\right)^{o(\phi)}\!\!.$$
%<\! |\mbox{End}(G)|\!\left(\frac{1}{d}\right)^{\!c(G)}\!\!\!\!.$$
If this sum is strictly less than one, then} there exists a coloring $c$ such that  for all nontrivial $\phi$ there is a $v$, such that
$c(v) \neq c(\phi(v))$, as desired.\qed

\w{
\noindent{\bf Proof of Lemma \ref{EML}.} From
$o(G)\geq m_e(G)/2$ we infer that
\begin{equation}
\label{eq:end}\sum_{\phi\in\End(G)\setminus\{\id\}} d^{-o(\phi)}\,\leq\, (|\End(G)|-1)\,d^{-o(G)}\,\leq\,(|\End(G)|-1)\,d^{-\frac{m_e(G)}{2}}.
\end{equation}
Hence, if $$d^{\frac{m_e(G)}{2}} \geq  |\End(G)|,$$
then the right side of Equation \ref{eq:end} is strictly less than 1, and therefore  also less than $\sum_{\phi\in\End(G)\setminus\{\id\}} d^{-o(\phi)}$.
Now an application of the Orbit Norm Lemma shows that $G$ is $d$-distinguishable. \qed.}

Lemma \ref{EML}  \m{is similar to the {\it Motion Lemma} of Russell and Sundaram \cite[Theorem 1]{rusu-98}, which} asserts
that $G$ is 2-distinguishable if  $$m(G) > 2\log_2 |\mbox{Aut}(G)|\,,$$ where
$$m(G) = \min_{\phi \in \Aut(G)\setminus\{\id\}} m(\phi)\,.$$
\m{Actually, a short look at the proof of  Russell and Sundaram shows that  $G$ is $d$-distinguishable under the weaker assumption}
\begin{equation}
\label{eq:rusu}
m(G) \geq 2\log_d |\mbox{Aut}(G)|.
\end{equation} Thus, our Endomorphism
Motion Lemma is a direct generalization of the
Motion Lemma of Russell and Sundaram.

The Motion Lemma allows \w{the computation of}
the distinguishing number of many classes of finite graphs.
We know of no such applications for the Endomorphism Motion Lemma, but will show
the applicability of its generalization to infinite graphs.

\section{ Infinite graphs}\label{main}

Suppose we are given an infinite graph $G$ with infinite endomorphism motion
$m_e(G)$ and wish to generalize Equation \ref{le:endmo} to this case for finite $d$.
Notice that
$$d^{m_e(G)/2} = d^{m_e(G)} = 2^{m_e(G)}$$
in this situation. Thus the natural generalization would be that
\begin{equation}
\label{eq:infendmo}
2^{m_e(G)} \geq |\End(G)|
\end{equation} implies endomorphism 2-distinguishabilty.
We formulate this as a conjecture.
\bigskip

{\bf Endomorphism Motion Conjecture} \emph{Let $G$ be a
connected, infinite graph with endomorphism motion $m_e(G)$. If $2^{m_e(G)} \geq
|\!\End(G)|$, then $D_e(G) = 2$.} \medskip

\w{This is a generalization of the Motion Conjecture of \cite{cuimle-2014} for automorphisms of graphs. Notice that we
assume connectedness now, which we did not do before. The reason is, that we
not only have to break all endomorphisms of every connected component if the graph is disconnected, but that we also have to worry about breaking mappings between possibly infinitely many different connected components, which  requires  extra effort.}
%Just consider the disjoint union of countably many one-sided infinite paths.

\w{A special case are countable graphs. Let $G$ be an infinite, connected countable graph with infinite endomorphism motion $m_e(G)$. Then}
$m_e(G) = \aleph_0$ and $2^{m_e(G)} = 2^{\aleph_0} = \textswab{c}$,
where $\textswab{c}$ denotes \w{t}he cardinality of the continuum.

\w{Notice, for countable graphs,}  $|\End(G)| \leq \aleph_0^{\aleph_0}
= 2^{\aleph_0} = \textswab{c}$. This means that Equation \ref{eq:infendmo} is always satisfied
for countably infinite graphs with infinite motion. This motivates the following conjecture:
\bigskip

\w{{\bf Endomorphism Motion Conjecture for Countable Graphs}}
%\footnote{\w{This generalizes the \emph{Infinite Motion Conjecture} of \cite{tu-11}.}}}
 \emph{Let $G$ be a
countable connected graph with infinite endomorphism motion. Then $G$ is endomorphism 2-distin\-guishable.} \medskip

In the last section we will verify this conjecture for countable trees with infinite endomorphism motion. Their  endomorphism monoids are uncountable and we will \w{show} that they have endomorphism distinguishing number
 2.

We now \w{prove} the \w{conjecture for countable endomorphism monoids. In fact, we  show that almost every coloring is distinguishing if the endomorphism monoid is countable.}

\begin{theorem}\label{TTconj_w}
Let $G$ be a graph with infinite \w{endomorphism motion} whose endomorphism monoid is countable. Let $c$ be a random $2$-coloring where all vertices have been colored independently and assume that there is an $\varepsilon > 0$ such that, for every vertex $v$, the probability that it is assigned \w{a} color $x \in \{\text{black}, \text{white}\}$ satisfies
\[
	\varepsilon \leq {\operatorname{Prob}}\left [c(v) = x \right ] \leq 1-\varepsilon.
\]
%for each color $x$.
Then $c$ is almost surely distinguishing.
\end{theorem}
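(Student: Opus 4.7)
The plan is to fix an arbitrary nontrivial endomorphism $\phi$ and show that $\Prob[c\text{ is preserved by }\phi]=0$; since $\End(G)$ is countable, countable subadditivity will then give that almost surely no $\phi\neq\id$ preserves $c$, which is the desired conclusion.

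For a fixed $\phi\in\End(G)\setminus\{\id\}$, I would partition $V(G)$ into the orbits $I_1,I_2,\ldots$ of $\phi$. A coloring is preserved by $\phi$ if and only if it is constant on each orbit, because any two vertices in a common orbit are linked by a chain of forward/backward applications of $\phi$ and must therefore receive the same color. Since the orbits are pairwise disjoint and the vertex colors are chosen independently, the events ``$c$ is constant on $I_i$'' are mutually independent, and hence
$$\Prob[c\text{ is preserved by }\phi]\;=\;\prod_{i}\Prob[c\text{ is constant on }I_i].$$

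The heart of the matter is a uniform upper bound on these factors. If $I_i$ is infinite, then $\Prob[c\text{ constant on }I_i]\leq\prod_{v\in I_i}\max(p_v,1-p_v)\leq\prod_{v\in I_i}(1-\varepsilon)=0$. If $|I_i|=k<\infty$ and $k\geq 2$, a short optimization over $p_v\in[\varepsilon,1-\varepsilon]$ shows that the quantity $\prod_{v}p_v+\prod_v(1-p_v)$ is maximized at a common extreme point, giving $\Prob[c\text{ constant on }I_i]\leq(1-\varepsilon)^k+\varepsilon^k\leq 1-2\varepsilon(1-\varepsilon)=:\alpha<1$, uniformly in $k$. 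Since $m_e(G)=\infty$, the endomorphism $\phi$ either has an infinite orbit, in which case the product above already contains the factor $0$, or else has only finite orbits, in which case $m(\phi)=\infty$ forces infinitely many non-singleton orbits and the product is dominated by $\alpha^{\aleph_0}=0$. Either way, $\Prob[c\text{ is preserved by }\phi]=0$.

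The step I expect to require the most attention is the uniform bound $\alpha<1$ on the ``constant on a non-singleton orbit'' probability: although each individual color probability $p_v$ may be as large as $1-\varepsilon$, one must check that the joint probability of agreement on an orbit is bounded away from $1$ by a constant \emph{independent of the orbit size}, and that the optimum over $[\varepsilon,1-\varepsilon]^k$ is indeed attained at the aligned corner. Everything else is a routine independence calculation followed by a countable union over $\End(G)$, and no additional graph-theoretic information beyond infinite endomorphism motion is required.
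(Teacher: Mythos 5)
Your proof is correct, and its skeleton --- fix a nontrivial $\phi$, show $\Prob[\phi \text{ preserves } c]=0$, then take a countable union over $\End(G)\setminus\{\id\}$ --- is exactly the paper's. Where you differ is in the per-endomorphism estimate. The paper argues that infinite motion yields infinitely many pairwise disjoint pairs $\{v_i,\phi(v_i)\}$, whose agreement events are independent and each have probability at most some $\varepsilon'<1$. You instead decompose $V(G)$ into $\phi$-orbits, observe that preservation of $c$ is equivalent to constancy on every orbit, and bound the constancy probability on each non-singleton orbit uniformly away from $1$. Your route is slightly longer but buys genuine robustness: for a non-injective $\phi$, infinitely many pairwise disjoint pairs $\{v,\phi(v)\}$ need not exist (if infinitely many moved vertices share a single image, every such pair contains that image), so the paper's extraction step as literally stated needs a small repair, whereas in exactly that situation your argument sees an infinite orbit and concludes probability $0$ directly. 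One simplification: the optimization you flag as the delicate point can be bypassed entirely --- for any two vertices $u,w$ of a non-singleton orbit one has $\Prob[c(u)=c(w)] = p_u p_w + (1-p_u)(1-p_w) \le \max(p_w,1-p_w) \le 1-\varepsilon$, which already gives a uniform bound strictly below $1$; your corner analysis via multilinearity of $\prod_v p_v + \prod_v(1-p_v)$ is nevertheless correct.
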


\begin{proof}
First, \m {let $\phi$ be a fixed, non-trivial} endomorphism of $G$. Since the motion of $\phi$ is infinite we can find infinitely many disjoint pairs $\{v_i, \phi (v_i) \}$. Clearly the colorings of these pairs are independent and the probability that $\phi$ preserves the coloring in  any of the pairs is bounded from above by some constant $\varepsilon' < 1$. Now
\[
{\operatorname{Prob}}\left [\phi \text{ preserves }c \right ]
\leq {\operatorname{Prob}} \left [ \forall i \colon c(v_i) = c(\phi (v_i)) \right ]
= 0.
\]

Since there are only countably many endomorphisms we can use $\sigma$-subadditivity of the probability measure to conclude that \m{
\[
{\operatorname{Prob}} \left[ \exists \phi \in \End (G)\setminus \{id\}\colon \phi \text{ preserves c} \right]
\leq \sum_{\phi \in \End (G)\setminus \{\id\}} {\operatorname{Prob}}\left [\phi \text{ preserves }c \right ]
= 0,
\]}
which completes the proof.
\qed
\end{proof}

We will usually only use the following Corollary of Theorem \ref{TTconj_w}.

\begin{corollary} \label{countmon}
Let $G$ be a graph with infinite motion whose endomorphism monoid is countable. Then $$D_e(G) =2.$$
\end{corollary}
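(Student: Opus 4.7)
My plan is to derive this as an immediate specialization of Theorem \ref{TTconj_w}. First, I would take the uniform random $2$-coloring of $G$ in which each vertex independently receives each of the two colors with probability $1/2$. This distribution trivially satisfies the hypothesis of Theorem \ref{TTconj_w} with $\varepsilon = 1/2$, since
\[
\varepsilon = \tfrac{1}{2} \leq \operatorname{Prob}[c(v) = x] = \tfrac{1}{2} \leq 1 - \varepsilon
\]
for each vertex $v$ and each color $x$.

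Second, since $\End(G)$ is countable and $G$ has infinite endomorphism motion, the hypotheses of Theorem \ref{TTconj_w} are met, and the theorem then asserts that the random coloring above is almost surely endomorphism-distinguishing. In particular, the event of distinguishing colorings has positive probability, so at least one distinguishing $2$-coloring of $G$ exists, which yields $D_e(G) \leq 2$.

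Finally, for the matching lower bound, I would observe that the hypothesis of infinite endomorphism motion implicitly requires $\End(G) \setminus \{\id\} \neq \emptyset$, since otherwise the minimum defining $m_e(G)$ would be vacuous. Thus $G$ admits at least one nontrivial endomorphism $\phi$, and any $1$-coloring (i.e.\ a monochromatic labeling) is preserved by every endomorphism, in particular by $\phi$. Hence $D_e(G) \geq 2$, and combining the two bounds gives $D_e(G) = 2$.

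There is no real obstacle in this corollary: it is essentially a one-line deduction from Theorem \ref{TTconj_w}, and the only subtlety worth flagging is the implicit non-rigidity of $G$ that guarantees the lower bound.
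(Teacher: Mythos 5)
Your proposal is correct and matches the paper's (implicit) argument exactly: the corollary is stated as an immediate consequence of Theorem \ref{TTconj_w} obtained by taking the uniform random $2$-coloring with $\varepsilon = 1/2$, so that a distinguishing coloring exists almost surely and hence exists. Your added remark on the lower bound $D_e(G) \geq 2$ (via the existence of a nontrivial endomorphism forced by the definition of $m_e(G)$) is a reasonable and correct completion of a point the paper leaves unstated.
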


\w{The endomorphism motion conjecture for countable
graphs generalises the}
\bigskip

{\bf  Infinite Motion Conjecture of Tucker \cite{tu-11}} \emph{Let $G$ be a connected, locally finite
infinite graph with infinite motion. Then $G$ is 2-distinguishable.}\medskip

\w{It was shown in \cite{cuimle-2014}, and follows from Theorem \ref{TTconj_w}, that it is true for countable $\mbox{Aut}(G)$.}
 There are numerous applications of this result, see \cite{imsm-xx}.

For the  Endomorphism Motion Conjecture for Countable Graphs we have the following \w{generalization of
 \cite[Theorem 3.2]{imkltr-07}:}

\begin{theorem}
\label{thm:fggroups}
Let $\Gamma$ be a finitely generated infinite group. Then there is a 2-coloring of the elements of\, $\Gamma$, such that the
identity endomorphism of\, $\Gamma$ is the only endomorphism that preserves this coloring. In other words,
finitely generated groups are endomorphism 2-distinguishable.
\end{theorem}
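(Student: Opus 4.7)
The plan is to mimic the probabilistic argument of Theorem \ref{TTconj_w}, with group endomorphisms of $\Gamma$ replacing graph endomorphisms. Inspection of that proof shows that no graph structure is actually used: only countability of the acting monoid and the fact that every non-trivial member has infinite motion. So the strategy reduces to verifying these two facts for the group endomorphism monoid $\End(\Gamma)$ acting on $\Gamma$.

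Countability of $\End(\Gamma)$ is immediate: $\Gamma$ is finitely generated and infinite, hence countable, and if $S$ is a finite generating set then every $\phi \in \End(\Gamma)$ is determined by $\phi|_S$, giving $|\End(\Gamma)| \leq |\Gamma|^{|S|} = \aleph_0$.

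For the motion, fix $\phi \in \End(\Gamma) \setminus \{\id\}$ and consider $F = \{g \in \Gamma : \phi(g) = g\}$. A routine check using the homomorphism property shows $F$ is a subgroup of $\Gamma$, and $\phi \neq \id$ makes $F$ proper. The moved set $\Gamma \setminus F$ is the union of the non-trivial cosets of $F$. Since $\Gamma$ is infinite, at least one of $|F|$, $[\Gamma : F]$ is infinite: in the first case each non-trivial coset already has cardinality $\aleph_0$, in the second there are infinitely many non-trivial cosets in the union. Either way, $m(\phi) = \aleph_0$.

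Once these two facts are in place, the argument of Theorem \ref{TTconj_w} transfers verbatim: take a uniformly random $2$-coloring of $\Gamma$; for each fixed non-trivial $\phi$ extract inductively an infinite family of disjoint pairs $\{v_i, \phi(v_i)\}$ from the moved set (at each step we only need to avoid the finite union of the previous pairs and their $\phi$-preimages, which is possible because the moved set is infinite); the probability that the random coloring agrees on all such pairs is $0$; and $\sigma$-subadditivity over the countable set $\End(\Gamma) \setminus \{\id\}$ gives that almost every $2$-coloring is endomorphism distinguishing, so in particular such a coloring exists. The only step that needs genuine thought is the motion analysis via the fixed-point subgroup; everything else is bookkeeping.
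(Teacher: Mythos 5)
Your overall route is the same as the paper's: show that $\End(\Gamma)$ is countable because an endomorphism is determined by its values on a finite generating set, show that every non-identity endomorphism has infinite motion because its fixed points form a proper subgroup whose complement in an infinite group is infinite, and then feed these two facts into the probabilistic argument of Theorem \ref{TTconj_w} (the paper does this by simply invoking Corollary \ref{countmon}). Your case analysis on $|F|$ versus $[\Gamma:F]$ is, if anything, written out more carefully than the paper's one-line version, and your remark that the graph-theoretic statement of Theorem \ref{TTconj_w} transfers because no graph structure is used is a fair point.

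However, the one step you chose to elaborate contains a genuine error. You claim that infinitely many pairwise disjoint pairs $\{v_i,\phi(v_i)\}$ can be extracted inductively because at each stage one ``only need[s] to avoid the finite union of the previous pairs and their $\phi$-preimages''. For a non-injective endomorphism the preimage of a single point can be infinite, so the set to be avoided need not be finite; worse, the conclusion itself fails in general. Take $\Gamma=\mathbb{Z}$ and $\phi$ the zero endomorphism: its motion is infinite (every $n\neq 0$ is moved), yet every pair has the form $\{n,0\}$, so no two pairs are disjoint. (The same soft spot is present in the paper's own proof of Theorem \ref{TTconj_w}, but having chosen to re-derive that argument and supply this justification, you own the gap.) The theorem survives because $\operatorname{Prob}[\phi\text{ preserves }c]$ can be bounded differently: the event forces $c$ to be constant on each orbit of $\phi$ in the sense of the paper's orbit decomposition, and since the orbit norm satisfies $o(\phi)\geq m(\phi)/2$, which is infinite here, the probability is at most $\lim_{k\to\infty}(1-\varepsilon)^{k}=0$; in the zero-map example the event is precisely ``$c$ is constant'', which indeed has probability $0$. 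With the disjoint-pairs step replaced by this orbit-norm (or an equivalent conditioning) argument, the rest of your proof --- countability, the fixed-point subgroup analysis, and $\sigma$-subadditivity --- goes through.
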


\proof Let $S$ be a finite set of generators of $\Gamma$ that is closed under inversion.
Since every element $g$ of $\Gamma$ can be represented as a product
$s_1s_2\cdots s_k$
of finite length in elements of $S$, we infer that $\Gamma$ is countable.

Also, if $\phi \in \mbox{End}(\Gamma)$, then
$$\phi(g) = \phi(s_1s_2\cdots s_k) = \phi(s_1)\phi(s_2) \cdots \phi(s_k).$$
Hence, every endomorphism $\phi$ is determined by the finite set
$$\phi(S)=\{\phi(s)\, | \,s \in S\}.$$
Because every $\phi(s)$ is a word of finite length in elements of $S$ there are only countably many
elements in $\phi(S)$. Hence $\mbox{End}(\Gamma)$ is
countable.

Now, let us consider the motion of the nonidentity elements of $\mbox{End}(\Gamma)$.
Let $\phi$ be such an element and consider the set
$$\mbox{Fix}(\phi)= \{g\in  \Gamma \,|\, \phi(g) = g\}.$$
It is easily seen that these elements form a subgroup of $\Gamma$. Since $\phi$ does not fix all
elements of $\Gamma$ it is a proper subgroup. Since its smallest index is two,
the set $\Gamma \setminus \mbox{Fix}(\phi)$ is infinite. Thus $m(\phi)$ is infinite. As  $\phi$ was arbitrarily
chosen, $\Gamma$ has infinite endomorphism motion.

By Corollary \ref{countmon} we conclude that $\Gamma$ is 2-distinguishable. \qed

The next theorem shows that the endomorphism motion conjecture is true if
$m_e(G) =|\!\End(G)|$, even if $m_e(G)$ is not countable.

\begin{theorem}
\label{thm:special}
Let $G$ be a connected graph with uncountable endomorphism motion. Then $|\!\End(G)| \leq m_e(G)$ implies $D_e(G)=2$.
\end{theorem}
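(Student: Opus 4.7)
My strategy is a transfinite greedy construction: the probabilistic arguments used for countable $\End(G)$ (Corollary \ref{countmon}) no longer apply when one must break continuum-many endomorphisms simultaneously, so colors have to be chosen one step at a time. Set $\kappa=|\End(G)\setminus\{\id\}|\leq m_e(G)$ and enumerate $\End(G)\setminus\{\id\}=\{\phi_\alpha:\alpha<\kappa\}$. I will build an ascending chain of partial $2$-colorings $c_\alpha\colon C_\alpha\to\{0,1\}$ with $C_\alpha\subseteq V(G)$, never recoloring a vertex, so that right after stage $\alpha$ the endomorphism $\phi_\alpha$ is broken by some pair $\{v_\alpha,\phi_\alpha(v_\alpha)\}\subseteq C_{\alpha+1}$; at limit stages I take unions.

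At stage $\alpha$, write $M_\alpha=\{v:\phi_\alpha(v)\neq v\}$ and call $v\in M_\alpha$ \emph{useless} if both $v$ and $\phi_\alpha(v)$ already lie in $C_\alpha$ \emph{and} $c_\alpha(v)=c_\alpha(\phi_\alpha(v))$. The key observation is that the useless set is a subset of $C_\alpha$, so its cardinality is at most $|C_\alpha|$. Because each successor step colors at most two new vertices, an easy cardinal induction gives $|C_\alpha|\leq 2|\alpha|$ at every stage (with the usual bound $|\bigcup_{\alpha<\lambda}C_\alpha|\leq 2|\lambda|$ at infinite limits), and the hypotheses that $m_e(G)$ is uncountable and $\kappa\leq m_e(G)$ then yield $|C_\alpha|<m_e(G)\leq|M_\alpha|$. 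Hence a useful $v_\alpha\in M_\alpha$ exists. If $c_\alpha$ already distinguishes $v_\alpha$ from $\phi_\alpha(v_\alpha)$, I set $c_{\alpha+1}=c_\alpha$; otherwise at least one of $v_\alpha,\phi_\alpha(v_\alpha)$ is uncolored and I extend $c_\alpha$ by giving the at most two uncolored vertices among $\{v_\alpha,\phi_\alpha(v_\alpha)\}$ opposite colors. Earlier stages remain broken because the coloring is never modified on previously colored vertices.

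After stage $\kappa$ I extend $c_\kappa$ to a full $2$-coloring of $V(G)$ arbitrarily; by construction it breaks every nontrivial endomorphism, so $D_e(G)\leq 2$, while $D_e(G)\geq 2$ follows from $\End(G)\neq\{\id\}$. The only technically delicate ingredient is the cardinality bookkeeping, and the subtlety I would make sure to highlight is that the useless set is pinned down by the condition $v\in C_\alpha$ rather than $\phi_\alpha(v)\in C_\alpha$, so one never has to estimate the preimage $\phi_\alpha^{-1}(C_\alpha)$, which could a priori be much larger than $C_\alpha$ when $\phi_\alpha$ collapses many vertices. Connectedness of $G$ does not seem to enter the argument beyond being part of the overall conjecture being verified.
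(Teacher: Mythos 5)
Your proposal is correct and is essentially the paper's own argument: a transfinite greedy construction that well-orders $\End(G)\setminus\{\id\}$ in order type $|\End(G)|$ and breaks each endomorphism in turn with a pair $\{v,\phi(v)\}$ of distinct colors, using $|\End(G)|\leq m_e(G)$ to guarantee that the fewer-than-$m_e(G)$ previously committed vertices cannot exhaust the moved set of the current endomorphism. Your write-up is in fact a bit more careful than the paper's (explicit cardinality bookkeeping $|C_\alpha|\leq 2|\alpha|$ and explicit treatment of the case where the pair is already distinguished), but the idea is the same.
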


\begin{proof}
Set $\textswab{n} = |\!\End(G)|$, and let $\zeta$ be the smallest ordinal number whose underlying set has cardinality $\textswab{n}$.
Furthermore, choose a well ordering $\prec$ of $A = \End(G)\setminus \{\id\}$ of order type $\zeta$, and let  $\phi_0$ be the smallest element
with respect to $\prec$. Then the cardinality of the set of all elements of $A$ between $\phi_0$ and any other $\phi \in A$ is smaller than $\textswab{n} \leq m_e(G)$.

Now we color all vertices of $G$ white and use transfinite induction to break all endomorphisms by coloring selected vertices black.
 By the assumptions of the theorem, there exists a vertex $v_0$  that is not fixed by $\phi_0$.  We color it black. This coloring breaks
$\phi_0$.

For the induction step, let $\psi \in A$. Suppose we have already broken all $\phi \prec \psi$ by pairs of vertices $(v_\phi, \phi(v_\phi))$,  where $v_\phi$ and $\phi(v_\phi)$ have distinct colors. Clearly, the cardinality of the set $R$ of all $(v_\phi, \phi(v_\phi))$, $\phi \prec \psi$, is less than $ \textswab{n} \geq m_e(G)$. By assumption, $\psi$ moves at least $m_e(G)$ vertices. Since there are still $\textswab{n}$ vertices not in $R$, there must be a  vertex $v_{\psi}$ that does not meet $R$. If $\psi(v_\psi)$ is white, we color $v_\psi$ black. This coloring breaks $\psi$. \qed
\end{proof}

\begin{corollary}
\label{cor:special}
Let $G$ be a connected graph with uncountable endomorphism motion. If the general continuum hypothesis holds, and if
$ |\!\End(G)| < 2^{m_e(G)}\!,$\, then $D_e(G) =2$.
\end {corollary}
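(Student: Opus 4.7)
The plan is to reduce this corollary directly to Theorem \ref{thm:special} by using the continuum hypothesis to convert the strict inequality $|\End(G)| < 2^{m_e(G)}$ into the weak inequality $|\End(G)| \leq m_e(G)$ required by the theorem.

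First, I would record that since $m_e(G)$ is uncountable by assumption, it is in particular an infinite cardinal, so the generalized continuum hypothesis yields
\[
2^{m_e(G)} = m_e(G)^+,
\]
where $\kappa^+$ denotes the cardinal successor of $\kappa$. Hence the hypothesis $|\End(G)| < 2^{m_e(G)}$ becomes $|\End(G)| < m_e(G)^+$.

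Next I would use the fact that there is no cardinal strictly between $m_e(G)$ and its successor $m_e(G)^+$: any cardinal $\textswab{n}$ satisfying $\textswab{n} < m_e(G)^+$ must satisfy $\textswab{n} \leq m_e(G)$. Applied with $\textswab{n} = |\End(G)|$, this yields $|\End(G)| \leq m_e(G)$.

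Finally, I would invoke Theorem \ref{thm:special}, whose hypotheses (connectedness, uncountable endomorphism motion, and $|\End(G)| \leq m_e(G)$) are now all satisfied, to conclude that $D_e(G) = 2$. There is no real obstacle here: the corollary is essentially a cosmetic restatement of Theorem \ref{thm:special} in which the cardinal arithmetic of GCH replaces a direct comparison of $|\End(G)|$ with $m_e(G)$ by the more suggestive comparison with $2^{m_e(G)}$, bringing the statement in line with the form of the Endomorphism Motion Conjecture.
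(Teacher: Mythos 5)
Your proof is correct and follows exactly the paper's argument: apply GCH to identify $2^{m_e(G)}$ with the cardinal successor of $m_e(G)$, so that the strict inequality $|\!\End(G)| < 2^{m_e(G)}$ becomes $|\!\End(G)| \leq m_e(G)$, and then invoke Theorem \ref{thm:special}. If anything, your write-up is slightly more explicit than the paper's about the final appeal to the theorem.
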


\proof By the generalized continuum
hypothesis $2^{m_e(G)}$ is the successor of $m_e(G)$. Hence, the inequality $2^{m_e(G)}
>|\!\End(G)|$ is equivalent to ${m_e(G)} \geq |\!\End(G)|$. \qed

\section{ Examples and outlook }\label{ex}

So far we have only determined the endomorphism distinguishing numbers of
core graphs, such as the complete graph and odd cycles, and proved that
$D_e(C_{2k}) = 2$ for $k \geq 3$. Furthermore, it is easily seen that
$D_e(K_{n,n}) = n+1$ and $D_e(K_{m,n}) = \max(m,n)$ if $m \neq n$.

In the case of infinite structures we proved Theorem \ref{thm:fggroups}, which shows
that $D_e(\Gamma) = 2$ for finitely generated, infinite groups $\Gamma$.

We will now determine the endomorphism distinguishing numbers of finite and
infinite paths and we begin with the following lemma.

\begin{lemma}\label{dist2}
Let $\phi$ be an endomorphism of a (possibly infinite) tree $G$ such that
$\phi(u)=\phi(v)$ for two distinct vertices $u,v$.  Then there exist
two vertices $x,y$ on the path between $u$ and $v$ such that
$\phi(x)=\phi(y)$ and ${\rm dist}(x,y)=2$.
\end{lemma}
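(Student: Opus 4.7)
The plan is to use the unique path in the tree between $u$ and $v$, apply $\phi$ to it, and exploit the fact that closed walks in a tree must backtrack.

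First I would let $u = w_0, w_1, \ldots, w_n = v$ be the unique path from $u$ to $v$ in $G$. Since $u \neq v$ we have $n \geq 1$, and in fact $n \geq 2$: otherwise $uv$ would be an edge and $\phi(u)\phi(v)$ would have to be an edge of $G$ too, but $\phi(u) = \phi(v)$ would force it to be a loop, contradicting the fact that a tree is simple.

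Applying $\phi$ yields a sequence $\phi(w_0), \phi(w_1), \ldots, \phi(w_n)$ which, since $\phi$ is an endomorphism, is a walk in $G$ (consecutive vertices are adjacent). This walk has positive length and is closed, because it starts at $\phi(u)$ and ends at $\phi(v) = \phi(u)$. I would then invoke the standard tree fact that any closed walk of positive length in a tree must contain a backtrack, i.e., an index $1 \leq i \leq n-1$ with $\phi(w_{i-1}) = \phi(w_{i+1})$. Taking $x = w_{i-1}$ and $y = w_{i+1}$ gives two vertices on the path from $u$ to $v$ at distance $2$ from each other with $\phi(x) = \phi(y)$, which is what we want.

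The only real obstacle is justifying the backtracking claim rigorously, and this is routine: a non-backtracking walk $a_0 a_1 \cdots a_n$ in a tree is easily shown, by induction on $n$, to be a geodesic (so in particular $\dist(a_0, a_n) = n$), and a closed walk of positive length would violate this. Alternatively, one can argue directly by induction on $n$: the base case $n = 2$ gives $x = w_0$ and $y = w_2$ immediately, and for $n \geq 3$ either some pair $\phi(w_{i-1}) = \phi(w_{i+1})$ appears (and we are done) or the endpoints $\phi(w_0)$ and $\phi(w_n)$ lie at distance $n > 0$, contradicting $\phi(u) = \phi(v)$.
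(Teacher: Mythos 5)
Your proof is correct, but it realizes the key ``folding'' step differently from the paper. The paper considers the subtree $P'$ induced by the image $\phi(P)$ of the $u$--$v$ path, invokes the fact that every nontrivial finite tree has at least two pendant vertices to find a leaf $w$ of $P'$ with $w \neq \phi(u)=\phi(v)$, pulls $w$ back to an internal vertex $z$ of $P$, and observes that the two neighbours of $z$ on $P$ must both map to the unique neighbour of $w$ in $P'$. You instead work directly with the image sequence as a closed walk of positive length in a tree and extract a backtrack index $i$ with $\phi(w_{i-1})=\phi(w_{i+1})$; your key lemma is that non-backtracking walks in trees are geodesics rather than the leaf-counting fact. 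The two arguments are close cousins (both exploit that the image of the path cannot be an honest path back to its own starting point), but yours is slightly more self-contained: it avoids passing to the induced subgraph and, as a bonus, explicitly rules out the case $\dist(u,v)=1$ via the no-loops observation, a case the paper's proof silently skips when it jumps from $\dist(u,v)\neq 2$ to $\dist(u,v)>2$. The paper's version is marginally shorter once the two-leaves fact is taken for granted; yours requires the routine induction for the backtracking claim, which you correctly flag and sketch.
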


\begin{proof}
 Suppose dist$(u,v)\neq 2$.
Hence dist$(u,v)>2$. Let $P$ be the path  connecting $u$ and
$v$ in $G$, and let $P'$ be the subgraph induced by the image $\phi(P)$.
Clearly, $P'$ is a finite tree with at least one edge.

Because every nontrivial finite tree
has at least two pendant vertices, there must be a pendant
vertex $w$ of $P'$ that is different from $\phi(u) = \phi(v)$.
Thus $w=\phi(z)$ for some internal vertex $z$ of
$P$. If $x$ and $y$ are the two neighbors of $z$ on $P$, then
clearly $\phi(x)=\phi(y)$ and dist$(x,y)=2$. \qed
\end{proof}

The above lemma implies  the following corollary for
finite graphs, because any injective endomorphism of a finite graph is
an automorphism.

\begin{corollary}\label{dist2fin}
Let $G$ be a finite tree. Then for every
$\phi\in$\End$(G)\setminus$\Aut$(G)$ there exist two vertices $x,y$
of distance 2 such that $\phi(x)=\phi(y)$. \qed
\end{corollary}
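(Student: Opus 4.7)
The plan is to derive the corollary directly from Lemma \ref{dist2} by showing that the hypothesis of that lemma is automatically satisfied for any non-automorphic endomorphism of a finite tree.

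First I would observe that $\phi \in \End(G) \setminus \Aut(G)$ cannot be a bijection. Indeed, $G$ is finite, so a map $\phi: V(G) \to V(G)$ is a bijection if and only if it is injective. If $\phi$ were injective then, being a graph homomorphism into a finite graph of the same vertex count, it would map edges to edges bijectively and thus be an automorphism, contradicting the assumption. Hence $\phi$ fails to be injective, which means there exist two distinct vertices $u, v \in V(G)$ with $\phi(u) = \phi(v)$.

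Now I would simply invoke Lemma \ref{dist2} on these two vertices $u, v$. The lemma produces vertices $x, y$ on the (unique) $u$--$v$ path in the tree $G$ satisfying $\phi(x) = \phi(y)$ and $\dist(x,y) = 2$, which is exactly what the corollary claims.

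There is essentially no obstacle here, since the only content beyond Lemma \ref{dist2} is the elementary fact that a non-automorphic endomorphism of a finite graph fails to be injective. The argument is a two-line deduction.
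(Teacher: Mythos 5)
Your proposal is correct and matches the paper's own reasoning: the paper derives the corollary from Lemma \ref{dist2} with exactly the same one-line observation that an injective endomorphism of a finite graph is an automorphism, so any $\phi\in\End(G)\setminus\Aut(G)$ identifies two vertices. Nothing further is needed.
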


\begin{lemma}\label{le:path}
The endomorphism distinguishing number of
all finite paths $P_n$ of order $n\geq 2$ is two.
\end{lemma}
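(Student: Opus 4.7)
The lower bound is immediate: for every $n\ge 2$, the path $P_n$ admits the reflection as a non-trivial automorphism, so $D_e(P_n)\ge D(P_n)\ge 2$. For the upper bound, I plan to exhibit a $2$-coloring $c\colon V(P_n)\to\{0,1\}$ whose only color-preserving endomorphism is the identity, with $V(P_n)=\{v_1,\dots,v_n\}$.

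The key lever is Corollary \ref{dist2fin}: any $\phi\in\End(P_n)\setminus\Aut(P_n)$ identifies two vertices $v_i,v_{i+2}$ at distance $2$, so if $c$ is preserved by such a $\phi$ then necessarily $c(v_i)=c(v_{i+2})$. Hence it suffices that $c$ satisfy the two conditions: (i) $c(v_i)\neq c(v_{i+2})$ for every $1\le i\le n-2$, and (ii) $c$ is not a palindrome, i.e.\ $c(v_j)\neq c(v_{n+1-j})$ for some $j$. Condition (i) rules out every non-automorphism endomorphism via Corollary \ref{dist2fin}, and (ii) breaks the reflection, which is the unique non-trivial automorphism of $P_n$. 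Together they show $c$ is endomorphism-distinguishing.

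The existence of such a $c$ reduces to a short combinatorial exercise. Condition (i) forces $c$ to alternate independently at the odd-indexed and at the even-indexed positions, so it must take the form $(a,b,\bar a,\bar b,a,b,\bar a,\bar b,\dots)$ for some $a,b\in\{0,1\}$. This yields only four candidates, and a quick case analysis on $n\bmod 4$ shows a non-palindromic one always exists: for odd $n\ge 3$ none of the four is a palindrome; for $n\equiv 0\pmod 4$ the choice $a=b$ is non-palindromic; for $n\equiv 2\pmod 4$ (including the trivial case $n=2$ where (i) is vacuous) the choice $a\neq b$ is non-palindromic. Concretely, the pattern $1,1,0,0,1,1,0,0,\dots$ works unless $n\equiv 2\pmod 4$, in which case the shifted pattern $1,0,0,1,1,0,0,1,\dots$ works.

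The only delicate point is the parity bookkeeping inside the case analysis for (ii): the color at position $n$ (respectively $n-1$) in the alternating pattern depends on the parity of $\lfloor n/2\rfloor$, so one has to consciously pair the residue $n\bmod 4$ with the correct choice of $a=b$ versus $a\neq b$. The implication from (i) and (ii) to endomorphism-distinguishing via Corollary \ref{dist2fin} is then by inspection.
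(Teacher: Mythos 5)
Your proposal is correct and follows essentially the same route as the paper: both reduce the problem via Corollary \ref{dist2fin} to finding a $2$-coloring in which vertices at distance $2$ get distinct colors and which additionally breaks the reflection, and both arrive at the same period-$4$ colorings (your systematic derivation of the four candidates and the case split on $n \bmod 4$ just makes explicit what the paper states directly).
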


\begin{proof}
Clearly, $D_e(P_n)\geq
2$ since $\End(P_n)\neq\Aut(P_n)$.
To see that $D_e(P_n)=2$ consider the following labeling
$$c(P_n)=\left\{ \begin{array}{ll} (11221122.....1122)& \mbox{ if } n\equiv 0\mod 4 \\
(11221122... 11221)& \mbox{ if } n\equiv 1 \mod 4\\
(1221122.....22112) & \mbox{ if } n\equiv 2 \mod 4 \\
(11221122... 22112) &\mbox{ if } n\equiv 3 \mod 4
\end{array}\right. .$$
The only nontrivial automorphism of a path (symmetry with respect to
the center) does not preserve this  labeling. By Corollary
\ref{dist2fin}, any  other endomomorphism
$\phi\in\End(G)\setminus\Aut(G)$ has to identify two vertices of distance
two. Then $\phi$ cannot preserve the coloring, because any two vertices of
distance two  have distinct labels.
\qed\end{proof}

 Next let us consider the ray and the double ray which can be viewed as an infinite \w{analogs} to finite paths. It turns out that their endomorphism distinguishing number is $2$ as well.

\begin{lemma}\label{le:doubleray}
The endomorphism distinguishing number of the infinite ray and of the infinite double ray is two.
\end{lemma}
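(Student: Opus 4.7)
The plan is to exhibit an explicit $2$-coloring for each of the two graphs and verify that only the identity endomorphism preserves it. I encode an endomorphism of the ray as a walk $f\colon\mathbb{N}\to\mathbb{N}$ with $|f(i{+}1)-f(i)|=1$, via $\phi(v_i)=v_{f(i)}$; preservation of a $2$-coloring $c$ then amounts to the sequence identity $c(v_{f(i)})=c(v_i)$ for all $i$. The analogous encoding works for the double ray with $\mathbb{Z}$ in place of $\mathbb{N}$.

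For the ray I would choose $c$ with two simultaneous properties: $c$ is aperiodic (ruling out every non-trivial shift $f(i)=i+k$, which are the only injective non-identity walks on $\mathbb{N}$), and $c$ has enough structural rigidity to forbid walks with turning points. A concrete option is to let $c(v_i)=1$ exactly when $i$ lies in a carefully chosen aperiodic marker set $A\subseteq\mathbb{N}$ with $0\in A$ and strictly increasing gaps between consecutive elements. For a non-injective endomorphism, Lemma \ref{dist2} produces two vertices $v_x,v_y$ at distance $2$ with $\phi(v_x)=\phi(v_y)$, so the walk $f$ has a turning point $t$ with $f(t{-}1)=f(t{+}1)$; preservation of $c$ then demands $c(v_{t-1})=c(v_{t+1})$ and, by following $f$ past the turn, must force a color mismatch from the structural properties of $c$.

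For the double ray I would extend the coloring to $\mathbb{Z}$ asymmetrically, for instance by applying the above pattern on the non-negative half and an inequivalent aperiodic pattern on the negative half, so that no axis of palindromic symmetry exists. Shifts are broken by aperiodicity, reflections by the asymmetry, and non-injective endomorphisms are handled by the same walk-with-turn analysis as on the ray.

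The main obstacle lies in the non-injective case. The naive approach of arranging every distance-$2$ pair to have different colors fails: with only two colors, this constraint forces $c$ to have period $4$, which contradicts the aperiodicity needed to defeat shifts. Consequently, the distance-$2$ identification from Lemma \ref{dist2} is insufficient on its own. The proof must trace the walk $f$ past its turning point and exploit the aperiodic global structure of $c$, in particular the growing-gap structure of $A$, to produce a color mismatch further along the walk; making this rigorous for a specific chosen coloring, and ruling out exotic bounded walks that might cleverly visit $A$-positions at the required sparse times, is the core technical difficulty.
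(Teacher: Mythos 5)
Your proposal is an outline rather than a proof: the reductions you make are sound (injective endomorphisms of the ray are precisely the shifts, and of the double ray the shifts and reflections; non-injective endomorphisms have a turning point by Lemma \ref{dist2}; and the naive requirement that all distance-two pairs get distinct colors forces period $4$ and hence fails against shifts), but you then explicitly leave open the one step that carries all the weight, namely fixing a concrete coloring and verifying that no folding endomorphism preserves it. This gap is genuine, not a matter of routine verification, because the success of the plan depends delicately on unstated arithmetic properties of the marker set $A$. For instance, if $A=\{a_0<a_1<\cdots\}\subseteq\mathbb{N}$ has strictly increasing gaps $g_n=a_{n+1}-a_n$ that are all \emph{even}, then the coloring ``black on $A$, white off $A$'' is preserved by a nontrivial endomorphism: choose $m$ with $g_m\geq 3$, send every marker $a_n$ to $a_m$, and send each segment $[a_n,a_{n+1}]$ to a closed walk of length $g_n$ based at $a_m$ that oscillates between $a_m+1$ and $a_m+2$ (all white) before returning. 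So ``growing gaps'' alone does not rule out the bounded folding walks you worry about; parities and the precise growth of the gaps must be specified and exploited, and the assertion that ``following $f$ past the turn must force a color mismatch'' is exactly the statement that still needs a proof. The double ray inherits the same unresolved difficulty, with the added case analysis of folds centered at an edge versus at a vertex.

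For comparison, the paper does not prove Lemma \ref{le:doubleray} directly at all: it derives it as a special case of Theorem \ref{thm:inftrees}, whose proof colors the spheres around a root $v_0$ by an explicit pattern of period $12$ (two white spheres, four black, two reserved, four black), first forcing every color-preserving endomorphism to fix all distances from $v_0$, and then using the reserved spheres to pin down each vertex. On the ray and the double ray the spheres are singletons or pairs, so that argument specializes to a fully verified periodic-with-markers coloring. Your sparse-marker strategy is a genuinely different route and could likely be completed for these two graphs, but as written it stops precisely where the work begins.
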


Later in this section  Theorem \ref{thm:inftrees} will show that every countable tree with at most one pendant vertex has endomorphism distinguishing number two. Clearly Lemma \ref{le:doubleray} constitutes a special case of this result. It is also worth noting that by the following theorem every double ray has infinite endomorphism motion. Hence we verify the Endomorphism Motion Conjecture for the class of countable trees.

\begin{theorem}\label{thm:treemotion}
A\w{n infinite} tree has infinite endomorphism motion if and only if it has no pendant vertices.
\end{theorem}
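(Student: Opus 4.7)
The plan is to prove the equivalence in two directions, the nontrivial one being that a tree with no pendant vertices forces every non-identity endomorphism to move infinitely many vertices.

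The easy direction is: if $T$ has a pendant vertex $v$ with unique neighbor $u$, then I exhibit an endomorphism of finite positive motion. If $T$ has at least three vertices, connectedness forces $u$ to have some other neighbor $w\neq v$, so defining $\phi(v)=w$ and $\phi(x)=x$ for $x\neq v$ gives a map that sends the only affected edge $uv$ to the edge $uw$; hence $\phi\in\End(T)$ with $m(\phi)=1$. If $T=P_2$, the nontrivial automorphism has motion $2$. Either way $m_e(T)$ is finite.

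For the converse, I would exploit the standard fact that every endomorphism is $1$-Lipschitz: $\dist(\phi(x),\phi(y))\leq \dist(x,y)$. In a tree the $u$-$w$ path is the unique walk of length $\dist(u,w)$, so if $\phi(u)=u$ and $\phi(w)=w$, then for any $z$ on this path, $\phi(z)$ lies on a walk from $u$ to $w$ of length $\leq \dist(u,w)$, which must be the path itself, giving $\phi(z)=z$. Consequently $\phi$ fixes the convex hull (i.e.\ the smallest subtree containing) its fixed set pointwise.

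Now assume $T$ has no pendant vertex and, for contradiction, let $\phi\in\End(T)\setminus\{\id\}$ have finite motion. Set $F=\{v:\phi(v)\neq v\}$, a finite non-empty set, and $S=V(T)\setminus F$. If the convex hull of $S$ were all of $V(T)$, then $\phi$ would fix everything, contradicting $\phi\neq\id$. So pick $v\notin\mathrm{conv}(S)$; then $v\in F$ and, by the characterization of the convex hull in a tree, $S$ lies entirely inside one component of $T\setminus\{v\}$. Since $\deg_T(v)\geq 2$, there is at least one other component $C$ of $T\setminus\{v\}$, and $C\subseteq F$.

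The final step, which is the main technical point, is to verify that every component of $T\setminus\{v\}$ is infinite; this then contradicts $|F|<\infty$ and completes the proof. The reason is that in such a component $C$, the unique neighbor $u$ of $v$ in $C$ has $\deg_C(u)=\deg_T(u)-1\geq 1$, while every other vertex of $C$ has the same degree in $C$ as in $T$, hence $\geq 2$. So $C$, viewed as a subtree, has at most one leaf (namely $u$, in the case $\deg_T(u)=2$); but any finite tree with at most one leaf must consist of a single vertex, which would force $\deg_C(u)=0$, contradicting $\deg_T(u)\geq 2$. Therefore $C$ is infinite, delivering the required contradiction. The main obstacle I anticipate is simply keeping this leaf-counting argument clean; all the other pieces follow directly from the non-expansive property of endomorphisms and from tree convexity.
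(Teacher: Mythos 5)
Your proof is correct and follows essentially the same route as the paper: your observation that endomorphisms are $1$-Lipschitz and hence fix the convex hull of their fixed-point set pointwise is exactly the content (and the proof) of the paper's Lemma~\ref{le:fpconnected}, and the rest is the same contradiction between a cofinite fixed set and the absence of pendant vertices. You merely make explicit a detail the paper leaves implicit, namely that every component of $T\setminus\{v\}$ is infinite, so a cofinite proper subset cannot induce a connected subgraph.
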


The proof uses the following lemma which may be interesting \m{as such}. Note that in the statement of the lemma there is no restriction  on the cardinality of the tree or the motion of the endomorphism.

\begin{lemma}\label{le:fpconnected}
Let $T$ be a tree and let $\phi$ be an endomorphism of $T$. Then the set of fixed points of $\phi$ induces a connected subgraph of $T$.
\end{lemma}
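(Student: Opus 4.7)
The plan is to take two fixed points $u, v \in \operatorname{Fix}(\phi)$ and show that every vertex on the unique $u$--$v$ path in $T$ is also fixed. The key observation will be that a homomorphism preserves edges, so the length of a path cannot increase under $\phi$, and then the fact that in a tree the distance between two vertices is realized by a unique walk of that length.

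More concretely, I would proceed as follows. Let $P = v_0 v_1 \cdots v_n$ be the unique path in $T$ from $u = v_0$ to $v = v_n$, so that $n = \operatorname{dist}(u,v)$. Applying $\phi$, the sequence $\phi(v_0)\phi(v_1)\cdots \phi(v_n)$ is a walk in $T$, since each consecutive pair $v_iv_{i+1}$ is an edge and $\phi$ is an endomorphism. This walk has length $n$ and connects $\phi(u)=u$ to $\phi(v)=v$. Since $T$ is a tree, every walk from $u$ to $v$ has length at least $\operatorname{dist}(u,v) = n$, and the only walk achieving this length is the geodesic $P$ itself. Hence $\phi(v_i) = v_i$ for every $i$, which gives the required connectivity of $\operatorname{Fix}(\phi)$.

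I don't expect a real obstacle here: the argument rests entirely on two elementary tree properties (unique geodesics, and walks never shorter than the distance) together with the fact that endomorphisms send edges to edges, and it makes no use of finiteness or of any bound on motion, matching the generality stated in the lemma. The only point to watch is the possibility that $\phi$ identifies consecutive vertices of $P$, but this would produce a closed step in $\phi(P)$ whose endpoints coincide, contradicting the absence of loops in $T$; equivalently, it would force $\phi(P)$ to have length strictly less than $n$, which is already ruled out by counting edges along the walk.
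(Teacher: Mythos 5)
Your proof is correct and is essentially the paper's own argument: both map the unique $u$--$v$ path to a $u$--$v$ walk of the same length and invoke the fact that in a tree the only such walk is the geodesic itself, forcing the path to be fixed pointwise. The paper merely phrases this as a contradiction starting from a disconnected fixed-point set, which is an inessential difference.
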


\begin{proof}
Denote by Fix$(\phi)$ the set of fixed points of $\phi$ and assume that it does not induce a connected subgraph. Consider two vertices $v_1, v_2 \in \mbox{Fix}(\phi) $ lying in different components of this graph.

Then $\phi$ maps the unique path in $T$ from $v_1$ to $v_2$ to a $v_1$-$v_2$-walk of the same length. But the only such walk is the path connecting $v_1$ and $v_2$, so this path has to be fixed pointwise. \qed
\end{proof}

{\bf Proof of Theorem \ref{thm:treemotion}}.
Clearly, if \m{an infinite tree} has a pendant vertex,  then there is an endomorphism which moves only this vertex and fixes everything else.

So let $T = (V,E)$ be a tree without pendant vertices and let $\phi$ be a nontrivial endomorphism of $T$. Assume that the motion of $\phi$ \w{is} finite. Then the set $\mbox{Fix}(\phi)$ of fixed points of $\phi$ contains all but finitely many vertices of $T$. Since $T$ has no pendant vertices such a set does not induce a connected subgraph. This contradicts Lemma \ref{le:fpconnected}.
\qed

Now that we have characterised the trees with infinite endomorphism motion, we would like to show that all of them have endomorphism distinguishing number 2.

\begin{theorem}\label{thm:inftrees}
The endomorphism distinguishing number of countable trees $T$ with at most one  pendant vertex
is 2.
\end{theorem}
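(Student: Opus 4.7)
My plan is to construct an explicit 2-coloring $c$ of $V(T)$ and verify directly that the only endomorphism of $T$ preserving $c$ is the identity. Since $\End(T)$ may well be uncountable (already the ray has continuum many endomorphisms), Corollary \ref{countmon} does not apply, and a fully probabilistic approach as in Theorem \ref{TTconj_w} is unavailable; the argument must be combinatorial and use the tree structure.

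\noindent Setup: Pick a distinguished vertex $v_0$ of $T$: if $T$ has a (unique) pendant vertex, let $v_0$ be that vertex, otherwise let $v_0$ be arbitrary. Root $T$ at $v_0$. By hypothesis every vertex other than $v_0$ has degree at least~$2$ in $T$ and hence has at least one child in $(T,v_0)$, so from $v_0$ one can construct an infinite ray $R=(v_0,w_1,w_2,\dots)$ by always descending into some subtree. Call the vertices outside~$R$ the \emph{branch vertices}.

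\noindent Coloring: On $R$, I would use a rigid pattern with strictly increasing gaps, e.g.\ color $w_i$ black exactly when $i$ belongs to $S=\{0,1,3,6,10,\dots\}$ (the triangular numbers). This has the property that any two finite subwords of distinct positions in the color sequence of $R$ differ, and in particular $R$ admits no non-identity color-preserving self-homomorphism. For each branch hanging off some $w_i$, I would then color the branch vertices inductively by depth, using a scheme that (a) uniquely identifies each child of a colored parent among its siblings, and (b) never reproduces anywhere in a branch the exact prefix color-pattern that occurs on an initial segment of $R$; for instance, every branch vertex at depth $\geq 1$ from $R$ could be forced to carry a color contradicting what the corresponding position on $R$ carries.

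\noindent Verification: Let $\phi$ be a color-preserving endomorphism. The plan is to show, in order: (i) $\phi(R)\subseteq R$; (ii) $\phi|_R=\id|_R$; (iii) $\phi=\id$. For (i), the marker-free design of the branch coloring prevents $R$'s color pattern from being matched off $R$, so the image of any long initial segment of $R$ lies in $R$. For (ii), Lemma \ref{dist2} says a non-injective endomorphism of a tree collapses two vertices at distance~$2$; the strictly increasing gap pattern on $R$ prevents any such collapse from being color-preserving, so $\phi|_R$ is an injective homomorphism of $R$ into $R$, and the rigidity of the pattern then forces $\phi|_R=\id_R$. For (iii), once $R$ is pointwise fixed, Lemma \ref{le:fpconnected} tells us the fixed set is a connected subtree containing $R$; then one propagates fixation to the branches by induction on distance from $R$, using property (a) of the branch coloring: if a parent $u$ is fixed and its children are pairwise distinguished by $c$, then $\phi$ must map each child of $u$ to itself.

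\noindent\textbf{Main obstacle.} The subtle point is step (i) combined with property (b) of the coloring: endomorphisms may collapse or fold $T$, so it is not enough that $R$'s exact coloring appear nowhere else as an isometric copy; no \emph{homomorphic} image of $R$ inside $T$ matching $R$'s colors may occur either, except $R$ itself. Arranging this with just two colors, while simultaneously keeping property (a) on the branches to complete step (iii), is the most delicate part of the construction and will require a carefully recursive definition of the branch coloring that encodes the depth and sibling-position of each vertex into the color sequence along every downward path.
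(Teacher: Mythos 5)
Your outline is a plan rather than a proof, and it has two genuine gaps that the paper's argument is specifically engineered to avoid. The first is the one you flag yourself: you never construct a 2-coloring of the ray $R$ and of the branches with the property that no color-preserving \emph{walk} (homomorphic, possibly folding, possibly leaving $R$) reproduces the color sequence of $R$ except $R$ itself. This is not a technicality to be deferred: it is the entire content of step (i), and nothing in your setup (triangular-number pattern on $R$, unspecified ``marker-free'' branch coloring) shows it can be done, especially since the walk may weave back and forth between $R$ and arbitrarily many branches. The second gap is in step (iii) and is independent of the first. A countable tree with no pendant vertices may have vertices of countably infinite degree, so a fixed parent can have infinitely many children; with two colors you cannot make the children ``pairwise distinguished by $c$'' through their own colors, and if you instead mean that the colored subtrees below the children are pairwise distinct, then knowing the parent is fixed does not force each child to itself --- you would need that no colored subtree admits a color-preserving homomorphism into a sibling's subtree, which is again the hard folding problem, now repeated at every branch point.

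The paper sidesteps both difficulties by colouring \emph{spheres} $S_k$ around $v_0$ rather than a ray: a carefully chosen periodic pattern of monochromatic levels (Figure \ref{fig:coloring}) reduces the anti-folding analysis to a finite case check on the sequence of level indices of an image walk, yielding first that $v_0$ is fixed and then that every color-preserving endomorphism maps each $S_k$ into itself. The sibling problem is then solved globally, not locally: the pattern deliberately leaves the spheres $S_{12i+9}$ and $S_{12i+10}$ uncolored, and in the second stage these encode an enumeration $(v_i)$ of \emph{all} vertices, colouring the descendants of $v_i$ in $S_{12i+9}$ black and all other vertices of that sphere white. Since level preservation forces descendants to map to descendants, any $\phi$ with $\phi(v_i)\neq v_i$ sends a black vertex of $S_{12i+9}$ to a white one. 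If you want to salvage your ray-based approach, you would need an analogue of this long-range encoding; the purely local induction ``fixed parent $\Rightarrow$ fixed children'' cannot work with two colors in trees of unbounded degree. (Your appeals to Lemma \ref{dist2} and Lemma \ref{le:fpconnected} are reasonable but play no role in closing either gap.)
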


\begin{proof}
The proof consists of two stages. First we color part of the vertices such that every endomorphism which preserves this partial coloring has to fix all distances from a given vertex $v_0$. Then we color the other vertices in order to break all remaining endomorphisms.

For the first part of the proof,  let $v_0$ be a pendant vertex of $T$, or any vertex if $T$ is a tree without pendant vertices. Denote by $S_n$ the set of vertices at distance $n$ from $v_0$, that is the sphere of radius $n$ with center $v_0$. Now color $v_0$ white and all of $S_1$ and $S_2$ black. Periodically color all subsequent spheres according to the pattern outlined in Figure \ref{fig:coloring}. In other words always color two spheres white, then four spheres black, leave \f{two spheres} uncolored, color another four spheres black and proceed inductively. Furthermore, we require that \f{adjacent uncolored vertices are assigned different colors in the second step of the proof}.

\begin{figure}[h]
\begin{center}
\includegraphics{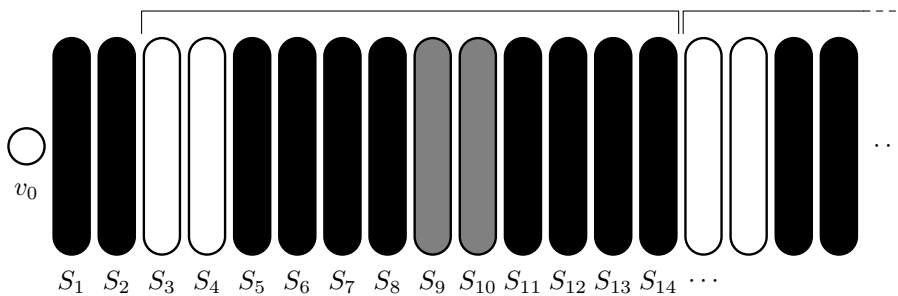}
\caption{Coloring of the spheres in the first part of the proof of Theorem \ref{thm:inftrees} with the period of the periodic part indicated at the top. Grey spheres are left uncolored for the second stage of the proof.}
\label{fig:coloring}
\end{center}
\end{figure}

Now we claim that this coloring fixes $v_0$ in every endomorphism. To prove this consider a ray $v_0v_1v_2v_3\ldots$ starting at $v_0$. Clearly $v_i \in S_i$ holds for every $i$. Assume that there is a color preserving endomorphism $\phi$ of $T$ which does not fix $v_0$ and consider the image of the previously chosen ray under $\phi$, that is, let $\tilde v_i = \phi (v_i)$. Clearly $\tilde v_0$ has to lie either in a white sphere or in a sphere which has not yet been colored. We will look at those cases and show that all of them lead to a contradiction. So assume that $\tilde v_0 \in S_k$ for some $k > 0$.
\begin{itemize}
\item If $k=3$, then $\tilde v_1$ must lie in $S_2$ since it must be a black neighbor of $\tilde v_0$. For similar reasons $\tilde v_2 \in S_1$ and $\tilde v_3 = v_0$ must hold. Now $\tilde v_4$ has to be a white neighbor of $\tilde v_3$ but $v_0$ only has black neighbors, a contradiction.
\item If $k \in 3 + 12 {\mathbb N}$ we get $\tilde v_1  \in S_{k-1}$ and $\tilde v_2 \in S_{k-2}$ by the same argument as above. Now $\tilde v_3$ would need to be a white neighbor of $\tilde v_2$ but $\tilde v_2$ only has black neighbors.
\item If $k \in 4 + 12 {\mathbb N} _0$, \m{where ${\mathbb N} _0={\mathbb N}\cup\{0\}$}, then, for similar reasons as in the previous cases, $\tilde v_1 \in S_{k+1}$ and $\tilde v_2 \in S_{k+2}$. Again $\tilde v_2$ has no white neighbors.
\item If $k \in 9 + 12 {\mathbb N} _0$, \f{then $\tilde v_2$ lies in one of $S_{k-2}$, $S_k$ and $S_{k+2}$. In the first case $\tilde v_2$ clearly has no white neighbors. In the other cases it may have a white neighbor $\tilde v_3$ in $S_{k+1}$, but then $\tilde v_3$ has no white neighbors, because its neighbor in $S_{k}$ must have a different color.}
\item If $k \in 10 + 12 {\mathbb N} _0$,  \f{we can use an argument that is symmetric to the previous case}.
\end{itemize}
Since there are no more cases left we can conclude that $v_0$ has to be fixed by every endomorphism which preserves this coloring.

However, \m{we wish to prove that} such endomorphism $\phi$ preserves all distances from $v_0$, that is, \w{that} $\phi$ maps $S_k$ into itself for each $k$.

\f{We first show that any $v \in S_k$ for $k \in 2 + 12 {\mathbb N}_0$ must have its image in $S_l$ for some $l \in 2 + 12 {\mathbb N}_0$. Since $v_0$ is fixed, $v$ must be mapped to a vertex at even distance from $v_0$. Furthermore, this vertex must be black and have a white neighbor, which again must have a white neighbor. It is easy to check that the only vertices for which all of this holds lie in $S_l$ for some $l \in 2 + 12 {\mathbb N}_0$.}

\f{Now assume that $\phi$ does not map $S_k$ into itself for every $k$} and consider the smallest $k$ such that $\phi (S_k) \nsubseteq S_k$. Then there must be some vertex $u \in S_k$ such that $\phi (u) \in S_{k-2}$. This immediately implies that $k \notin \{ 1, 2 \}$ and that $k \notin \{3,4,5,6\} + 12 {\mathbb N}_0$, because otherwise a white vertex would be mapped to a black vertex or vice versa. In order to treat the remaining cases,  \f{consider a vertex $v \in S_l$ whose predecessor in $S_k$ is $u$, where $l$ is chosen to be minimal with respect to the properties $l > k$, $l \in 2 + 12 {\mathbb N}_0$. The unique $u$-$v$-path in $T$ must be mapped to a $\phi(u)$-$\phi(v)$-walk with length at most $l-k$. This implies that $\phi(v)$ cannot lie in $S_{m}$ for $m \geq l$. The $u$-$v$-path does not contain two consecutive white vertices, hence the $\phi(u)$-$\phi(v)$-walk cannot cross the two consecutive white layers $S_{l-10}$ and $S_{l-11}$. So $\phi(v)$ cannot lie in $S_{m}$ for $m \leq l-12$. But this contradicts the fact that $\phi(v)$ must lie in some $S_{m}$ for $m \in 2 + 12 {\mathbb N} _0$.}

This completes the proof of the fact that all distances from $v_0$ are fixed by any endomorphism which preserves such a coloring.

\w{For the second part of the proof, consider any enumeration $(v_i)_{i \geq 0}$  of the vertices of $T$ such that, for all
$i \geq 0$, we have $v_i \in S_j$ for some $j < 12i+9$. It is easy to see that
such an enumeration is possible.}
Now color all vertices in $S_{12i+9}$ whose predecessor is $v_i$ black and color all other vertices in this sphere white. \f{Color the vertices of $S_{12i+10}$ whose predecessor is $v_i$ white, and color all other vertices in this sphere black.}

We claim that the so obtained coloring is not preserved by any  endomorphism but the identity. We already know that every color preserving endomorphism $\phi$ maps every sphere $S_k$ into itself. Assume that there is a vertex $v_i$ which is not fixed by $\phi$. Then it is easy to see that all vertices in $S_{12i+9}$ whose predecessor is $v_i$ will be mapped to vertices whose predecessor is $\phi (v_i)$. Hence $\phi $ is not color preserving.
\qed
\end{proof}

\m{We conjecture that this result can be extended to
 uncountable trees.} One does need a lower bound
on the minimum degree though, see \cite{imkltr-07}.
As we already noted, the fact that $D_e(T) = 2$, together with
the observations that  $|\mbox{End}(T)| = \textswab{c}$ and
$m_{e}(T) = \aleph_0$, supports
 the Endomorphism Motion Conjecture.
Of course, a proof of the Endomorphism Motion Conjecture is still not in sight,
not even for countable structures.

Finally, the computation of $D_e(Q_k)$ seems to be an interesting
problem, even for finite  cubes. Similarly,  the computation of $D_e(K_n^k)$,
where $K_n^k$ denotes the $k$-th Cartesian
power\footnote{For the definition of the Cartesian product and
Cartesian powers see \cite{haimkl-11}.} of $K_n$, looks demanding.

\w{\noindent{\bf Acknowledgement} We wish to thank one of the referees for numerous clarifying remarks and helpful comments.}

%%%%%%%%%%%%%%%%%%%%%%%%%%%%%%%%%%%%%%%%%%%%%%%%%%%%%%%%%%%%%%%
%%%%%%%%%%%%%%%%%%%%%%%%%%%%%%%%%%%%%%%%%%%%%%%%%%%%%%%%%%%%%%%

\end{document}